\title{A Liouville theorem for $p$-harmonic functions on exterior
  domains}
\author{E. N. Dancer}
\author{Daniel Daners}
\author{Daniel Hauer}
\address{School of Mathematics and Statistics,
  The University of Sydney, NSW 2006, Australia}
\email[E. N. Dancer]{\href{mailto:norman.dancer@sydney.edu.au}{
\nolinkurl{norman.dancer@sydney.edu.au}}}
\email[Daniel Daners]{\href{mailto:daniel.daners@sydney.edu.au}{
\nolinkurl{daniel.daners@sydney.edu.au}}}
\email[Daniel Hauer]{\href{mailto:daniel.hauer@sydney.edu.au}{
\nolinkurl{daniel.hauer@sydney.edu.au}}}
\date{16. November 2014 (revised Version)}
\subjclass[2010]{35B53, 35J92, 35B40}
\keywords{elliptic boundary-value problems, Liouville-type theorems,
 $p$-Laplace operator, $p$-harmonic functions, exterior domain.}
\numberwithin{equation}{section}
\theoremstyle{theorem}
\newtheorem{theorem}{Theorem}[section]
\newtheorem{proposition}[theorem]{Proposition}
\newtheorem{lemma}[theorem]{Lemma}
\theoremstyle{definition}
\newtheorem{definition}[theorem]{Definition}
\newtheorem{assumption}[theorem]{Assumption}
\theoremstyle{remark}
\newtheorem{remark}[theorem]{Remark}
\newcommand\R{{\mathbb{R}}}
\newcommand\dx{\mathrm{d}x }
\newcommand\dr{\mathrm{d}r }
\newcommand\ds{\mathrm{d}s }
\newcommand\dH{\mathrm{d}\mathcal{H}}
\DeclareMathOperator*{\divergence}{div}
\DeclareMathOperator{\supp}{supp}
\DeclareMathOperator{\loc}{loc}
\newcommand\abs[1]{\lvert#1\rvert} 
\newcommand\norm[1]{\lVert#1\rVert} 
\begin{document}

\begin{abstract}
  We prove Liouville type theorems for $p$-harmonic functions on
  exterior domains of $\R^{d}$, where $1<p<\infty$ and $d\geq 2$. We
  show that every positive $p$-harmonic function satisfying zero
  Dirichlet, Neumann or Robin boundary conditions and having zero limit
  as $|x|$ tends to infinity is identically zero. In the case of zero
  Neumann boundary conditions, we establish that any semi-bounded
  $p$-harmonic function is constant if $1<p<d$. If $p\ge d$, then it is
  either constant or it behaves asymptotically like the fundamental
  solution of the homogeneous $p$-Laplace equation.
\end{abstract}

\maketitle

\section{Introduction and main results}
Assume that $\Omega$ is a general \emph{exterior domain} of $\R^{d}$,
that is, a connected open set such that $\Omega^{c}=\R^{d}\setminus
\Omega$ is compact and nonempty. We assume that the boundary
$\partial\Omega$ is the disjoint union of the sets $\Gamma_{1}$,
$\Gamma_{2}$, where $\Gamma_1$ is closed. We denote by $\nu$ the outward
pointing unit normal vector on $\partial\Omega$ and $\mathcal{H}$ the
$(d-1)$-dimensional Hausdorff measure on $\partial\Omega$. For
$1<p<\infty$ define the $p$-Laplace operator $\Delta_p$ by
$\Delta_{p}v:=\divergence(\abs{\nabla v}^{p-2}\nabla v)$.

The aim of this paper is to establish a Liouville theorem for weak
solutions of the elliptic boundary-value problem
\begin{equation}\label{eq:1}
  \begin{aligned}
    -\Delta_{p} v &= 0 && \text{in $\Omega$,}\\
    \mathcal{B}v&=0 && \text{on $\partial \Omega$,}
  \end{aligned}
\end{equation}
where
\begin{displaymath}
  \mathcal{B}v:=
  \begin{cases}
    v|_{\Gamma_{1}}
    &\text{on $\Gamma_{1}$ (Dirichlet b.c.),}\\
    \abs{\nabla v}^{p-2}\frac{\partial v}{\partial \nu}+h(x,v)
    &\text{on $\Gamma_{2}$ (Robin/Neumann b.c.).}\\
  \end{cases}
\end{displaymath}
Here we assume that $h\colon\Gamma_2\times \R\to \R$ is a Carath\'eodory
function (see \cite{MR0176364}) satisfying
\begin{equation}\label{assum:h}
  h(\cdot,v)\in L^{p/(p-1)}(\Gamma_{2})
  \quad\text{and}\quad
  \text{$h(x,v)v\geq 0$ for $\mathcal{H}$-a.e. $x\in \Gamma_{2}$,}
\end{equation}
for every $v\in L^{p}(\Gamma_2)$. Note that the first condition
in~\eqref{assum:h} implicitly implies a growth condition on the function
$v\mapsto h(\cdot,v)$; see \cite{MR0159197}. As usual, a function $v\in
W^{1,p}_{\loc}(\Omega)\cap C(\Omega)$ is said to be \emph{$p$-harmonic}
(or simply \emph{harmonic} if $p=2$) on $\Omega$ if $\Delta_{p}v=0$ in
$\Omega$ in the weak sense, that is,
\begin{displaymath}
  \int_{\Omega}\abs{\nabla v}^{p-2}\nabla v\nabla \varphi\,\dx=0
\end{displaymath}
for every $\varphi\in C^{\infty}_{c}(\Omega)$; see
\cite{MR0170096}. Throughout, we call a $p$-harmonic function $v$ \emph{positive}
if $v\geq 0$.

The classical Liouville theorem asserts that every harmonic function on
the whole space $\R^{d}$ is constant if it is bounded from below or from
above; see for instance \cite[Theorem 3.1]{MR1805196} or
\cite[p. 111]{MR762825}.  The classical Liouville theorem was
generalised to $p$-harmonic functions on the whole space $\R^{d}$ for
$1<p<\infty$; see \cite[Theorem II]{MR1946918} or \cite[Corollary
6.11]{MR1207810}. The result extends to $d$-harmonic function on
$\R^{d}\setminus\{0\}$ for $d\ge 2$; see \cite[Corollary 3.3]{MR1805196}
for $p=d=2$ or \cite[Corollary 2.2]{MR859333} for $p=d\geq 2$. In our
investigation, the \emph{fundamental solution}
\begin{equation}
  \label{eq:2}
  \mu_{p}(x):=
  \begin{cases}
    \abs{x}^{(p-d)/(p-1)} & \text{if $p\neq d$,}\\
    \log \abs{x} & \text{if $p=d$.}
  \end{cases}
\end{equation}
on $\R^{d}\setminus\{0\}$ plays an important role.  If $1<p<d$, then
$\mu_p$ provides an example of a non-constant $p$-harmonic function
bounded from below. Another example valid for $1<p< d$ is given by
$v(x):=1-\mu_{p}(x)$ for every $x\in \mathbb R^d\setminus B_1$, where
$B_{1}$ denotes the open unit ball. In this case $v$ is a positive
$p$-harmonic function on the exterior domain
$\Omega:=\R^{d}\setminus\overline{B}_{1}$ satisfying zero Dirichlet
boundary conditions at $\partial \Omega$. Hence, in order to have a
chance of proving a Liouville type theorem for exterior domains we need
to make use of the boundary conditions and the behavior of a
$p$-harmonic function near infinity.

First, we consider the case $1<p<d$. Then by \cite[Corollary,
p.84]{MR0186903} or~\cite[Theorem 2 \& Theorem 3]{MR2424533} and by
rescaling if necessary, we know that for every positive $p$-harmonic
function $v$ on an exterior domain $\Omega\subseteq \R^{d}$, the limit
\begin{equation}
  \label{eq:6}
  b:=\lim_{|x|\to\infty}v(x)\quad\text{exists and}\quad
  |v(x)-b|\le c_{1}\,\mu_{p}(x)\text{ whenever $|x|\ge 2$}
\end{equation}
where $c_{1}>0$. With this in mind, our first result is a kind of
maximum principle for weak solution of~\eqref{eq:1} on an unbounded
domain. A precise definition of weak solutions of \eqref{eq:1} is given
in Definition~\ref{def:wsol} below.
\begin{theorem}
  \label{thm:1}
  Let $\Omega$ be an exterior domain with Lipschitz boundary and let
  $1<p<\infty$. Suppose that \eqref{assum:h} is satisfied and that $v$ is a
  positive weak solution of \eqref{eq:1} such that
  \begin{math}
    \lim_{|x|\to\infty}v(x)=0.
  \end{math}
  Then $v\equiv 0$.
\end{theorem}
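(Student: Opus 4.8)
The plan is a Caccioppoli-type energy estimate in which the decay of $v$ at infinity forces the global energy of $v$ to vanish. Recall that $v$ being a weak solution of~\eqref{eq:1} means, in particular, that
\[
\int_{\Omega}\abs{\nabla v}^{p-2}\nabla v\cdot\nabla\varphi\,\dx
+\int_{\Gamma_{2}}h(x,v)\,\varphi\,\dH=0
\]
for every test function $\varphi$ of compact support in $\overline{\Omega}$ lying in the function class of Definition~\ref{def:wsol} and vanishing on $\Gamma_{1}$; since $v|_{\Gamma_{1}}=0$, the product of $v$ with a cutoff is admissible. I fix a family of Lipschitz cutoffs $\eta_{R}$ with $\eta_{R}\equiv1$ on $B_{R}$, $\eta_{R}\equiv0$ outside $B_{2R}$, and $\abs{\nabla\eta_{R}}\le 2/R$, and take $R$ large enough that $\Omega^{c}\subseteq B_{R}$, so that $\nabla\eta_{R}$ is supported in $B_{2R}\setminus B_{R}\subseteq\Omega$, away from $\partial\Omega$.

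Inserting $\varphi:=v\,\eta_{R}^{p}$ into the weak formulation (justified by a density argument, using that $v\in W^{1,p}$ in a neighbourhood of $\partial\Omega$ and $v|_{\Gamma_{1}}=0$) and writing $\nabla\varphi=\eta_{R}^{p}\nabla v+p\,v\,\eta_{R}^{p-1}\nabla\eta_{R}$, the boundary integral becomes $\int_{\Gamma_{2}}h(x,v)\,v\,\eta_{R}^{p}\,\dH\ge0$ by~\eqref{assum:h}, hence may be discarded after being moved to the correct side of the equation. Estimating the remaining cross term by $p\int_{\Omega}(\abs{\nabla v}\,\eta_{R})^{p-1}(\abs{\nabla\eta_{R}}\,v)\,\dx$ and applying Young's inequality with exponents $p/(p-1)$ and $p$ to absorb a small multiple of $\int_{\Omega}\abs{\nabla v}^{p}\eta_{R}^{p}\,\dx$ into the left-hand side yields
\[
\int_{\Omega}\abs{\nabla v}^{p}\eta_{R}^{p}\,\dx
\le C\int_{\Omega}\abs{\nabla\eta_{R}}^{p}v^{p}\,\dx
\le \frac{C}{R^{p}}\int_{B_{2R}\setminus B_{R}}v^{p}\,\dx
\le C\,R^{d-p}\Bigl(\sup_{\abs{x}\ge R}v(x)\Bigr)^{p},
\]
with $C$ independent of $R$.

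It then suffices to show that the right-hand side tends to $0$ as $R\to\infty$. If $p\ge d$, the factor $R^{d-p}$ stays bounded while $\sup_{\abs{x}\ge R}v\to0$ by hypothesis, so the right-hand side vanishes in the limit. If $1<p<d$, then~\eqref{eq:6} applies with $b=\lim_{\abs{x}\to\infty}v(x)=0$, giving $0\le v(x)\le c_{1}\abs{x}^{(p-d)/(p-1)}$ for $\abs{x}\ge2$, so that the right-hand side is at most $C\,R^{d-p}R^{p(p-d)/(p-1)}=C\,R^{(p-d)/(p-1)}\to0$. In either case, letting $R\to\infty$ and using $\eta_{R}\equiv1$ on $B_{R}$ together with Fatou's lemma gives $\int_{\Omega}\abs{\nabla v}^{p}\,\dx=0$; hence $\nabla v\equiv0$, and since $\Omega$ is connected $v$ is constant, so $\lim_{\abs{x}\to\infty}v(x)=0$ forces $v\equiv0$.

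The step I expect to require the most care is the first one, namely verifying that $v\,\eta_{R}^{p}$ genuinely belongs to the class of admissible test functions of Definition~\ref{def:wsol}: that $v$ has the Sobolev regularity and trace on $\partial\Omega$ needed for the Robin integral to be defined (and to keep its sign via~\eqref{assum:h}), and that multiplication by $\eta_{R}^{p}$ preserves the Dirichlet condition on $\Gamma_{1}$ — this is where the Lipschitz regularity of $\partial\Omega$ enters. A second point worth flagging is that for $1<p<d$ the bare decay $v\to0$ is insufficient: since $\abs{B_{2R}\setminus B_{R}}\sim R^{d}$, one really needs the quantitative rate~\eqref{eq:6} to beat the growing volume factor.
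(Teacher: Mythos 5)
Your argument is correct, and it is close in spirit to the paper's, but it finishes differently in two places, so a comparison is worthwhile. The paper also tests with $v\varphi_r^p$ and discards the boundary term using $h(x,v)v\ge 0$ (Proposition~\ref{prop:2}), but it keeps the cross term via H\"older's inequality, producing \eqref{ineq:p-integral-Br-Kc}, and then invokes the general criterion of Proposition~\ref{prop:1}/Lemma~\ref{lem:1}: there it suffices that the scaled annulus integral \eqref{p-integral:bound} be merely \emph{bounded}, because the two-step argument in Lemma~\ref{lem:1} first extracts $\nabla v\in L^p(\Omega)^d$ and then lets the annulus term tend to zero. You instead absorb the cross term by Young's inequality, which gives the cleaner bound $\int|\nabla v|^p\eta_R^p\le CR^{-p}\int_{B_{2R}\setminus B_R}v^p$, but forces you to show the right-hand side actually tends to zero; this works here because the decay hypothesis (together with the rate \eqref{eq:6} when $1<p<d$, which you correctly identify as indispensable against the volume factor $R^{d-p}$) is strong enough, whereas the paper's weaker requirement of boundedness is what makes its criterion reusable in Theorem~\ref{thm:2} and Remark~\ref{rem:prop1}, where no decay to a limit at a quantitative rate (or, for $p\ge d$, no decay at all) is available. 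Do make sure the absorption is justified by the finiteness of $\int_{\Omega\cap B_{2R}}|\nabla v|^p\eta_R^p$ for each fixed $R$, which holds precisely because weak solutions are taken in $V^{1,p}(\Omega)$, and by the admissibility of $v\eta_R^p$ as a test function in $V^{1,p}_{\Gamma_1}(\Omega)$, as you flag. The second difference is the case $p\ge d$: the paper does not run the energy argument there at all, but simply quotes \cite{MR2807111} (a non-trivial positive bounded $p$-harmonic function on an exterior domain has a strictly positive limit), which has the advantage of using neither the boundary conditions nor the Lipschitz regularity of $\partial\Omega$; your unified energy treatment of $p\ge d$ is also correct under the stated hypotheses and has the advantage of being self-contained, at the price of genuinely using the boundary condition and boundary regularity in a case where the paper shows they are not needed.
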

If $p\ge d$, the conclusion of the theorem is valid without any
restrictions on the boundary conditions or regularity of $\Omega$ due to
a result in \cite{MR2807111}; see Section~\ref{section:proofs}. If
$1<p<d$, then under some additional assumptions on $v$ we can remove the
assumption that $\partial\Omega$ is Lipschitz. The condition is that $v$
has a trace in some weak sense which is in $L^p(\Gamma_2)$. Such a
condition is satisfied in the setting discussed in
\cite{article:non-linear-semigroups,MR1988110,MR1650081,MR2538601}.

The proof of Theorem~\ref{thm:1} relies on the asymptotic decay
estimates for positive $p$-harmonic functions on exterior domains as
stated in~\eqref{eq:6}. We give a simple alternative proof of such
estimates in case $p=2$ and $d\geq 3$ in Section~\ref{section:case-p-2}.

If $p\geq d$, then there are two alternatives for a positive
$p$-harmonic function $v$: Either $v$ is bounded in a neighbourhood of
infinity and has a limit as $|x|\to\infty$, or $v\sim \mu_{p}$ near
infinity, that is,
\begin{displaymath}
  \lim_{\abs{x}\to\infty}\frac{v(x)}{\mu_{p}(x)}=c
\end{displaymath}
for some constant $c>0$; see \cite[Theorem~2.3]{MR2807111}. In the first
case, if $v>0$, then the limit is strictly positive; see
\cite[Lemma~A.2]{MR2807111}. See also the related work
in~\cite{MR2997363}. As an example let $\Omega:=\mathbb
R^d\setminus \overline{B}_1$ and set $v_{p}:=\mu_{p}+1$ if $p=d$ and
$v_{p}:=\mu_{p}$ if $p>d$. Then $v_{p}$ is a positive $p$-harmonic
function on $\Omega$ satisfying zero Robin boundary conditions
\begin{displaymath}
  |\nabla v|^{p-2}
  \frac{\partial v}{\partial \nu}+|v|^{p-2}v=0\qquad\text{on
    $\partial\Omega$.}
\end{displaymath}
Similarly, $w_{p}:=\mu_{p}$ if $p=d$ and $w_{p}:=\mu_{p}-1$
if $p>d$ satisfies zero Dirichlet boundary conditions on
$\partial\Omega$ and is a positive unbounded $p$-harmonic function on
$\Omega$.

Our second main result is a Liouville theorem for $p$-harmonic functions
on exterior domains with zero Neumann boundary conditions, that is, the
case $\Gamma_2=\partial\Omega$ and $h\equiv 0$.
\begin{theorem}
  \label{thm:2}
  Let $\Omega$ be an exterior domain with no regularity assumption on
  $\partial\Omega$. Suppose that $v$ is a weak solution of \eqref{eq:1}
  on $\Omega$ that is bounded from below or from above. Moreover, assume
  that $v$ satisfies homogeneous Neumann boundary conditions, that is,
  $h(x,v)\equiv 0$ and $\Gamma_2=\partial\Omega$. If $1<p<d$, then $v$
  is constant.  If $p\ge d$, then $v$ is either constant or
  $v\sim\pm\mu_{p}$ near infinity.
\end{theorem}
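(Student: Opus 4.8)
The plan is to reduce to a positive solution, feed in the known behaviour near infinity, and then annihilate the gradient by a Caccioppoli estimate with a truncated, compactly supported test function.

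Replacing $v$ by $-v$ if necessary — this preserves both the equation and the homogeneous Neumann condition — we may assume $v$ is bounded from below, and after adding a constant $c$ that $V:=v+c\ge 0$. By the Harnack inequality for $p$-harmonic functions either $V\equiv 0$, in which case $v$ is constant, or $V>0$ on $\Omega$. Choose $R_{0}$ with $\Omega^{c}\subset B_{R_{0}}$; the restriction of $V$ to the exterior domain $\R^{d}\setminus\overline{B}_{R_{0}}\subset\Omega$ is a positive $p$-harmonic function (no boundary condition is involved there), so the quoted asymptotics apply to it. If $1<p<d$, then \eqref{eq:6} gives that $b:=\lim_{|x|\to\infty}V(x)$ exists and $\abs{V(x)-b}\le c_{1}\mu_{p}(x)$ for $|x|$ large. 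If $p\ge d$, then by \cite[Theorem~2.3]{MR2807111} either $V\sim\mu_{p}$ near infinity — and since $\mu_{p}(x)\to\infty$ in this range, also $v=V-c\sim\mu_{p}$, which (undoing the sign reduction) is the alternative $v\sim\pm\mu_{p}$ — or $V$ is bounded near infinity and $b:=\lim_{|x|\to\infty}V(x)$ exists. In both remaining cases $V\to b$ as $|x|\to\infty$, and it suffices to prove $V\equiv b$.

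Put $w:=V-b$, so $\nabla w=\nabla v$, $w\to 0$ at infinity, and $\int_{\Omega}\abs{\nabla w}^{p-2}\nabla w\nabla\varphi\,\dx=0$ for every admissible $\varphi$ (recall $\Gamma_{1}=\emptyset$, so by Definition~\ref{def:wsol} the test functions are just bounded $W^{1,p}$ functions with bounded support). Fix $M>0$, let $T_{M}$ denote truncation at height $M$, and for $R$ so large that $B_{R_{0}}\subset B_{R}$ and $\sup_{|x|\ge R}\abs{w}<M$ take $\eta_{R}\in C_{c}^{\infty}(\R^{d})$ with $\eta_{R}\equiv 1$ on $B_{R}$, $\supp\eta_{R}\subset B_{2R}$, $\abs{\nabla\eta_{R}}\le C/R$. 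Testing with $\varphi:=\eta_{R}^{p}\,T_{M}(w)$, expanding the gradient, using that $\nabla T_{M}(w)$ equals $\nabla w$ on $\{\abs{w}<M\}$ and $0$ elsewhere, that $\nabla\eta_{R}$ is supported in $B_{2R}\setminus B_{R}$ — which lies in $\Omega$, away from $\partial\Omega$, since $R>R_{0}$, so no boundary terms arise — where $T_{M}(w)=w$, and then Young's inequality to absorb the gradient term, one obtains
\[
  \int_{\Omega\cap B_{R}\cap\{\abs{w}<M\}}\abs{\nabla w}^{p}\,\dx
  \le\frac{C}{R^{p}}\int_{B_{2R}\setminus B_{R}}\abs{w}^{p}\,\dx .
\]
Now I estimate the right-hand side. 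If $1<p<d$, then on the annulus $\abs{w}\le c_{1}\mu_{p}(x)=c_{1}\abs{x}^{(p-d)/(p-1)}\le c_{1}R^{(p-d)/(p-1)}$ (the exponent is negative), so, as $\abs{B_{2R}\setminus B_{R}}\le CR^{d}$, the bound is $CR^{-p+d+p(p-d)/(p-1)}=CR^{(p-d)/(p-1)}\to 0$. If $p\ge d$, then with $\varepsilon_{R}:=\sup_{|x|\ge R}\abs{w}\to 0$ the bound is $CR^{d-p}\varepsilon_{R}^{p}\to 0$ because $d-p\le 0$. Letting $R\to\infty$ gives $\int_{\Omega\cap\{\abs{w}<M\}}\abs{\nabla w}^{p}\,\dx=0$, hence $\nabla w=0$ a.e. on $\Omega\cap\{\abs{w}<M\}$; letting $M\to\infty$ gives $\nabla w=0$ a.e. on the connected set $\Omega$, so $w$ is constant, and $w\to 0$ forces $w\equiv 0$. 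Thus $V\equiv b$, so $v$ is constant, which together with the $V\sim\mu_{p}$ alternative completes the proof.

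The step I expect to be most delicate is not in the estimates but in justifying that $\eta_{R}^{p}T_{M}(w)$ is an admissible test function for the weak Neumann problem of Definition~\ref{def:wsol} — in particular that $V$ has finite $p$-energy on $\Omega\cap B_{2R}$ up to $\partial\Omega$ — and in checking that the asymptotics of \cite{MR2807111} and \eqref{eq:6} genuinely transfer to $V$ on $\R^{d}\setminus\overline{B}_{R_{0}}$ after the rescaling mentioned in the excerpt; the Harnack dichotomy $V\equiv 0$ versus $V>0$ disposes of the otherwise awkward trivial case.
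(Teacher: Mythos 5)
Your argument is correct, and its skeleton coincides with the paper's: reduce to a nonnegative solution using that $\Delta_p$ is odd and that constants can be added under homogeneous Neumann conditions, invoke \eqref{eq:6} for $1<p<d$ and the dichotomy of \cite[Theorem~2.3]{MR2807111} for $p\ge d$ (the $v\sim\pm\mu_p$ branch being dealt with exactly as in the paper), and then annihilate $\nabla v$ by testing the weak formulation with $(v-b)$ times a cutoff living on $B_{2R}$. The genuine difference is how the annulus term is handled. The paper feeds the H\"older-type estimate \eqref{ineq:p-integral-Br-Kc} into its general criterion (Proposition~\ref{prop:1} via Lemma~\ref{lem:1}), whose two-step bootstrap (first $\nabla v\in L^p(\Omega)$ from the exponent $\delta=(p-1)/p<1$, then letting the tail of the convergent integral vanish) only requires $r^{-p}\int_{B_{2r}\setminus B_r}\abs{v-b}^p\,\dx$ to stay \emph{bounded}; you instead absorb the annulus gradient term by Young's inequality, which requires this quantity to tend to $0$. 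That stronger decay is available in every case where you use it: for $1<p<d$ it follows from \eqref{eq:6}, and for $p\ge d$ from the existence of the limit $b$ furnished by Fraas--Pinchover (note that for $p=d$ you genuinely need $\sup_{\abs{x}\ge R}\abs{w}\to 0$ and not mere boundedness, which is why the paper's criterion is slightly more robust there, e.g.\ it runs off Remark~\ref{rem:prop1} alone), so your argument closes. The delicate points you flag at the end are already settled by the paper's framework: by Assumption~\ref{ass:1} and Definition~\ref{def:wsol} a weak solution lies in $V^{1,p}(\Omega)$, i.e.\ it has finite $W^{1,p}$-norm on $\Omega\cap B_{2R}$ up to $\partial\Omega$, and with $\Gamma_1=\emptyset$ any $V^{1,p}(\Omega)$-function with bounded support is admissible; consequently the truncation $T_M$ (and likewise the Harnack dichotomy) is a harmless but unnecessary precaution --- the paper simply tests with $(v-b)\varphi_r^p$. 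Your asymptotic transfer to $V$ on $\R^d\setminus\overline{B}_{R_0}$ is also exactly the paper's normalisation $\Omega^c\subseteq B_1$.
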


The proofs of the theorems are based on a general criterion for
Liouville type theorems established in
Section~\ref{sec:general-criterion}. We fully prove the two Theorems in
Section~\ref{section:proofs}.

There is an intimate relationship between Liouville-type theorems and
pointwise a priori estimates of solutions of boundary value
problems. On the one hand, Liouville's theorem for some semi-linear
equations on $\R^{d}$ can be seen as a corollary of pointwise a priori
estimates; see \cite[Lemma 1]{MR1658565}. On the other hand, Liouville's
theorem can be used to derive universal upper bounds for positive
solutions on bounded domains. These connections were outlined in
\cite[p.82]{MR1946918} and recently revisited in \cite{MR2350853}. More
precisely, it is shown in \cite[p.556]{MR2350853} that Liouville's
theorem and universal boundedness theorems are equivalent for
semi-linear equations and systems of Lane-Emden type; see also
\cite{MR859333}. This relationship becomes again apparent in this
paper. This article was motivated by application to domain perturbation
problems for semi-linear elliptic boundary value problems on domains
with shrinking holes; see \cite{article:Robin-domain-perturabition}.

\medskip
\noindent \textbf{Acknowledgement. }We thank the referees for the careful
reading and pointing out some mistakes in an earlier version of the
manuskript. Also we thank for the suggestion to state
Lemma~\ref{lem:1} explicitely.

\section{Estimates near infinity in the linear case}
\label{section:case-p-2}
In this section we establish pointwise decay estimates for semi-bounded
harmonic functions on an exterior domain $\Omega\subseteq\R^{d}$ when
$d\geq 3$. The result is a special case of estimates proved in
\cite{MR2424533}, but it seems appropriate to provide a much shorter
proof in the linear case.
\begin{proposition}
  \label{prop:decay}
  Let $v$ be harmonic on the exterior domain $\Omega\subseteq\mathbb
  R^d$, $d\geq 3$. Further assume that $v$ is bounded from below or from
  above. Then $b:=\lim_{\abs{x}\to\infty}v(x)$ exists. Moreover, there
  exist positive constants $r_0$ and $C_1$ such that
  \begin{equation}
    \label{eq:3}
  \abs{v(x)-b}\leq C_{1}\,\abs{x}^{2-d}
  \end{equation}
  for all $\abs{x}\ge r_{0}$.
\end{proposition}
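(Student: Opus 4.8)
The plan is to reduce the statement to the classical representation of harmonic functions on exterior domains via spherical averages. Since $v$ is bounded on one side, after replacing $v$ by $\pm v + \text{const}$ we may assume $v \geq 0$ on $\Omega$. Fix $R_0 > 0$ large enough that $\overline{B}_{R_0} \supseteq \Omega^c$, so that $v$ is a nonnegative harmonic function on the full exterior region $\R^d \setminus \overline{B}_{R_0}$. For $r \geq R_0$ set $M(r) := \fint_{\partial B_r} v \, \dH^{d-1}$ to be the spherical mean. The first step is to recall that on an annular region $r_1 < |x| < r_2$ every harmonic function has spherical mean of the form $a + b\,r^{2-d}$ (this is the content of the mean value property together with the fact that $\fint_{\partial B_r} v$ solves the radial Laplace equation $M'' + \frac{d-1}{r}M' = 0$). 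Since this holds on every such annulus inside $\{|x| > R_0\}$, the function $M(r)$ itself equals $\alpha + \beta\, r^{2-d}$ for all $r > R_0$, with $\alpha, \beta \in \R$ constants.

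The second step extracts the limit $b$. Because $v \geq 0$ we have $M(r) \geq 0$ for all $r \geq R_0$, which forces $\alpha \geq 0$; and since $d \geq 3$, $r^{2-d} \to 0$, so $M(r) \to \alpha =: b$ as $r \to \infty$. To upgrade convergence of the means to convergence of $v$ itself and to the pointwise estimate, I would apply the Harnack inequality: for the nonnegative harmonic function $v$ on the annulus $\{R_0 < |x| < \infty\}$, there is a dimensional constant $c_d$ so that
\begin{displaymath}
  \sup_{|x| = r} v(x) \leq c_d \inf_{|x| = r} v(x)
  \qquad (r \geq 2R_0),
\end{displaymath}
hence $\sup_{|x|=r} |v(x) - M(r)| \leq (c_d - 1)\,M(r) = (c_d-1)(b + \beta r^{2-d})$ when $\beta \geq 0$, and a similar bound with the roles adjusted when $\beta < 0$; in either case one gets $|v(x) - b| \leq c\, b^{?}$... — more carefully: combining $M(r) \to b$ with Harnack gives $v(x) \to b$ as $|x| \to \infty$, and it remains to establish the rate.

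For the rate \eqref{eq:3}, apply the Harnack inequality to the nonnegative harmonic function $w := v - b$ if $\beta \geq 0$ (so $w \geq 0$ for large $|x|$, since $M(r) = b + \beta r^{2-d} \geq b$, and Harnack on $w$ near radius $r$ gives $\sup_{|x|=r} w \leq c_d\, M_w(r) = c_d\,\beta\, r^{2-d}$), or to $w := b - v$ if $\beta < 0$ by the analogous argument; either way $|v(x) - b| \leq C_1 |x|^{2-d}$ for $|x| \geq r_0 := 2R_0$. The main obstacle — and the only place any real care is needed — is justifying that the spherical mean $M(r)$ is \emph{globally} of the form $\alpha + \beta r^{2-d}$ on all of $(R_0, \infty)$ rather than just piecewise on annuli, and handling the sign of $\beta$ uniformly; both are routine once one notes that $M$ is smooth and satisfies the radial ODE on the whole ray, whose solution space is spanned by $1$ and $r^{2-d}$. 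Everything else is the mean value property and the Harnack inequality, which is why the linear case admits this short argument.
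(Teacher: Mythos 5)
Your route (spherical means plus Harnack) is genuinely different from the paper's proof, which uses the Kelvin transform together with B\^ocher's theorem, and your first step is sound: the spherical mean $M(r)$ of $v$ over $\partial B_r$ does satisfy the radial ODE and equals $\alpha+\beta r^{2-d}$ on all of $(R_0,\infty)$. The gap is in passing from the mean to the function itself. Harnack on the spheres only gives $c_d^{-1}M(r)\le v\le c_d\,M(r)$ for $|x|=r$, which shows $v$ is bounded near infinity and pinned between $b/c_d$ and $c_d\,b$; it does not give $v(x)\to b$, so the existence of the limit is not established by "combining $M(r)\to b$ with Harnack". More seriously, the rate argument rests on the claim that $v-b\ge 0$ near infinity when $\beta\ge 0$ (resp.\ $b-v\ge 0$ when $\beta<0$), justified only by $M(r)\ge b$. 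But the sign of $\beta$ controls the spherical \emph{averages} of $v-b$, not its pointwise sign: for example $v(x)=b+x_1\,|x|^{-d}$ with $b>0$ is harmonic and positive near infinity, has $\beta=0$, and $v-b$ changes sign on every sphere, so Harnack cannot be applied to $v-b$ or $b-v$. (The proposition's conclusion of course still holds for this $v$, but your argument does not reach it.)

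For comparison: the paper's proof sidesteps both issues at once, since B\^ocher's theorem applied to the Kelvin transform $K[v]$ on $B_1\setminus\{0\}$ yields $K[v]=w+b\,|x|^{2-d}$ with $w$ harmonic across the origin, and transforming back gives $v(x)-b=|x|^{2-d}w(x/|x|^2)$, which delivers the limit and the exact rate simultaneously. If you want to salvage your more elementary toolkit, you need two additional arguments: (i) an oscillation-decay estimate (iterated Harnack on dyadic annuli applied to $v-\inf$ and $\sup-v$) to prove that $b:=\lim_{|x|\to\infty}v(x)$ exists; and (ii) once $u:=v-b\to 0$ at infinity, a comparison/maximum-principle argument on $\{|x|>R_1\}$ against the barriers $\pm A|x|^{2-d}$ with $A:=R_1^{d-2}\sup_{|x|=R_1}|u|$ (apply the maximum principle on annuli $R_1<|x|<R$ and let $R\to\infty$), which gives \eqref{eq:3}. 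As written, both the existence of the limit and the decay rate are unjustified.
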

To prove the proposition let $v$ be a harmonic function on the exterior
domain $\Omega$, and suppose that $v$ is bounded form below or from
above.  Since by assumption $\overline{\Omega}^{c}$ is bounded, by
translating and rescaling if necessary, we can assume without loss of
generality that $\overline{\Omega}^{c}$ is contained in the unit ball
$B_{1}$, and that $0\in \overline{\Omega}^{c}$. Furthermore, without
loss of generality we can consider non-negative harmonic functions on
$\Omega$. Indeed, if $v$ is bounded from below we consider $v+\inf v\geq
0$, and if $v$ is bounded from above we consider $-v+\sup v\geq 0$.

If $v$ is positive and harmonic on $\Omega$, then in particular $v$ is
positive and harmonic on $\overline{B}_{1}^{c}$. Hence, the Kelvin
transform $K[v]$ of $v$ given by
\begin{math}
K[v](x):= \abs{x}^{2-d}v(x/\abs{x}^{2})
\end{math}
for $x\in B_{1}\setminus\{0\}$ is positive and harmonic on
$B_{1}\setminus\{0\}$; see \cite[Theorem~4.7]{MR1805196}.  By B\^ocher's
theorem there exist a harmonic function $w$ on $B_{1}$ and a constant
$b\ge0$ such that
\begin{displaymath}
  K[v](x)=w(x)+b\,\abs{x}^{2-d}\qquad\text{or}\qquad K[v-b](x)=w(x)
\end{displaymath}
for every $x\in B_{1}\setminus\{0\}$ see \cite[Theorem~3.9]{MR1805196}.
Applying the Kelvin transform again yields
\begin{equation}
  \label{eq:4}
  v(x)-b=K[w](x)=\abs{x}^{2-d}\,w(x/\abs{x}^{2})
\end{equation}
for every $x\in \overline{B}_{1}^{c}$. Note that $w(x/\abs{x}^{2})\to
w(0)$ as $\abs{x}\to\infty$.  Hence \eqref{eq:4} implies the existence
of constants $C_1,r_{0}>0$ such that \eqref{eq:3} holds whenever
$\abs{x}>r_0$.

\section{A general criterion for Liouville type theorems}
\label{sec:general-criterion}
The proofs of the main theorems are based on a general criterion showing
that a function satisfying suitable integral conditions is constant.  We
generalise an idea from \cite[Lemma 2.1]{MR2410737}. Similar ideas were
used for instance in \cite{MR1849197,MR1946918} or in
\cite[Theorem~19.8]{MR2494977} for $p=2$.

\begin{proposition}
  \label{prop:1}
  Let $\varphi\in C^{\infty}_{c}(\mathbb R^{d})$ such that $0\le \varphi
  \le 1$ on $\R^{d}$, $\varphi\equiv 1$ on $\overline{B}(0,1)$ and with
  support contained in $\overline{B}(0,2)$. For $x\in\mathbb R^d$ and
  $r>0$ let $\varphi_{r}(x):=\varphi(x/r)$. Let $v\in
  W^{1,p}_{\loc}(\Omega)$ and suppose that there exist constants
  $b\in\mathbb R$ and $C_0,C_1,r_0>0$ such that
  \begin{multline}
    \label{ineq:p-integral-Br-Kc}
    \int_{\Omega\cap B_{2r}}\abs{\nabla v}^{p}\,\varphi^{p}_{r}\,\dx\\
    \leq \frac{C_{0}}{r} \left(\int_{(\Omega\cap B_{2r})\setminus
        B_{r}} \abs{\nabla v}^{p}\,\varphi_{r}^{p}\,\dx
    \right)^{(p-1)/p}\, \left(\int_{(\Omega \cap B_{2r})\setminus B_{r}}
      \abs{v-b}^{p}\,\dx\right)^{1/p}
  \end{multline}
  and
  \begin{equation}\label{p-integral:bound}
    \frac{1}{r^{p}}\int_{(\Omega\cap B_{2r})\setminus B_{r}}
    \abs{v-b}^{p}\,\dx
    \le C_1
  \end{equation}
  for all $r>r_0$.  Then $v$ is constant.
\end{proposition}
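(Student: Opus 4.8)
The plan is to abbreviate
$I(r):=\int_{\Omega\cap B_{2r}}\abs{\nabla v}^{p}\varphi_{r}^{p}\,\dx$ and
$J(r):=\int_{(\Omega\cap B_{2r})\setminus B_{r}}\abs{\nabla v}^{p}\varphi_{r}^{p}\,\dx$, and first to extract from the two hypotheses a self-improving inequality for $I(r)$. Since the integrand is nonnegative and the annulus is contained in $\Omega\cap B_{2r}$, one has trivially $J(r)\le I(r)$, and \eqref{p-integral:bound} bounds $\bigl(\int_{(\Omega\cap B_{2r})\setminus B_{r}}\abs{v-b}^{p}\,\dx\bigr)^{1/p}$ by $C_{1}^{1/p}r$. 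Feeding both facts into \eqref{ineq:p-integral-Br-Kc} gives
\begin{equation*}
  I(r)\;\le\; C_{0}C_{1}^{1/p}\,J(r)^{(p-1)/p}\;\le\; C_{0}C_{1}^{1/p}\,I(r)^{(p-1)/p},
\end{equation*}
valid for all $r>r_{0}$. Before dividing by $I(r)^{(p-1)/p}$ one should check that $I(r)<\infty$: after enlarging $r_{0}$ so that $\Omega^{c}\subseteq B_{r_{0}}$ (harmless, since the hypotheses persist for a larger $r_{0}$), the annulus $B_{2r}\setminus\overline{B}_{r}$ lies in $\Omega$, hence $J(r)<\infty$ because $v\in W^{1,p}_{\loc}(\Omega)$, and then the right-hand side of \eqref{ineq:p-integral-Br-Kc} is finite. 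Using $1<p<\infty$ one divides (the case $I(r)=0$ being trivial) to conclude $I(r)^{1/p}\le C_{0}C_{1}^{1/p}$, so that $I(r)\le C_{0}^{p}C_{1}$ uniformly for $r>r_{0}$.

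The second step is to upgrade this uniform bound to global integrability of the gradient. Since $\varphi_{r}(x)=1$ as soon as $r\ge\abs{x}$, Fatou's lemma yields $\int_{\Omega}\abs{\nabla v}^{p}\,\dx\le\liminf_{r\to\infty}I(r)\le C_{0}^{p}C_{1}<\infty$, i.e.\ $\nabla v\in L^{p}(\Omega)$.

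The third step runs the estimate in reverse. With $\abs{\nabla v}^{p}\in L^{1}(\Omega)$ in hand, dominated convergence gives $\int_{\Omega\setminus B_{r}}\abs{\nabla v}^{p}\,\dx\to 0$, whence $J(r)\le\int_{\Omega\setminus B_{r}}\abs{\nabla v}^{p}\,\dx\to 0$ and therefore, by the displayed inequality, $I(r)\le C_{0}C_{1}^{1/p}J(r)^{(p-1)/p}\to 0$ as $r\to\infty$. On the other hand $\varphi_{r}\equiv 1$ on $B_{r}$ forces $\int_{\Omega\cap B_{r}}\abs{\nabla v}^{p}\,\dx\le I(r)$, so letting $r\to\infty$ gives $\int_{\Omega}\abs{\nabla v}^{p}\,\dx=0$. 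Hence $\nabla v=0$ almost everywhere in $\Omega$, and since $\Omega$ is connected and $v\in W^{1,p}_{\loc}(\Omega)$, $v$ is constant.

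I do not expect a genuine obstacle here: the argument is a two-step bootstrap in which the hypothesis is used first to bound $I(r)$ and then, once $L^{p}$-integrability of $\nabla v$ is available, used again to make $I(r)$ decay. The only points requiring a little care are the finiteness of $I(r)$ needed to justify the division in the self-referential inequality, and applying the limit theorems to the right quantities — Fatou to obtain global integrability of $\nabla v$, and dominated convergence to obtain the decay of the exterior tail.
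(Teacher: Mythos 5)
Your proof is correct and follows essentially the same route as the paper: the paper factors the argument through Lemma~\ref{lem:1} (the same self-improving inequality with $\delta=(p-1)/p$ and $C=C_0C_1^{1/p}$), first deducing the uniform bound $\int_{\Omega\cap B_{2r}}\abs{\nabla v}^p\varphi_r^p\,\dx\le C^{1/(1-\delta)}$ and hence $\nabla v\in L^p(\Omega)^d$, then letting $r\to\infty$ so that the annular term vanishes and $\norm{\nabla v}_p=0$, and finally invoking connectedness of $\Omega$. Your explicit check that $I(r)$ is finite (via compactness of the closed annulus in $\Omega$) and your use of Fatou rather than monotone convergence are harmless refinements of the same argument.
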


The above proposition is a direct consequence of the following stronger
result. To prove Proposition~\ref{prop:1}, we set $C:=C_{0}C_{1}^{1/p}$ and
$\delta=(p-1)/p$ in the lemma below.  Then inequality~\eqref{eq:8} follows
from~\eqref{ineq:p-integral-Br-Kc} and~\eqref{p-integral:bound}.

\begin{lemma}
  \label{lem:1}
  Let $\varphi$ and $\varphi_{r}$ be as in Proposition~\ref{prop:1}. Let $v\in
  W^{1,p}_{\loc}(\Omega)$ and suppose that there exist constants
  $C$, $r_0>0$ and $\delta\in (0,1)$ such that
  \begin{equation}
    \label{eq:8}
    \int_{\Omega\cap B_{2r}}\abs{\nabla v}^{p}\,\varphi^{p}_{r}\,\dx
    \leq C \left(\int_{(\Omega\cap B_{2r})\setminus
        B_{r}} \abs{\nabla v}^{p}\,\varphi_{r}^{p}\,\dx
    \right)^{\delta}
  \end{equation}
  for all $r>r_0$.  Then $v$ is constant.
\end{lemma}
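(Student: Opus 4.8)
The plan is to set up an iteration over dyadic radii $r_k = 2^k r_0$ and show that the quantity $I_k := \int_{\Omega\cap B_{2r_k}}\abs{\nabla v}^{p}\varphi_{r_k}^{p}\,\dx$ forces $\int_\Omega \abs{\nabla v}^p\,\dx$ to be finite, and in fact zero. First I would abbreviate $A(r) := \int_{\Omega\cap B_{2r}}\abs{\nabla v}^{p}\varphi_{r}^{p}\,\dx$ and $D(r) := \int_{(\Omega\cap B_{2r})\setminus B_r}\abs{\nabla v}^{p}\varphi_{r}^{p}\,\dx$, so that \eqref{eq:8} reads $A(r)\le C\,D(r)^\delta$. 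The crucial observation is that $D(r)$ is a ``difference'' term: since $\varphi_r\le 1$ and $\varphi_r$ is supported in $B_{2r}$, and since $\varphi_{2r}\equiv 1$ on $B_{2r}$, we have $D(r)\le \int_{(\Omega\cap B_{2r})\setminus B_r}\abs{\nabla v}^p\,\dx \le A(2r) - (\text{something})$; more usefully, $D(r_k)\le \int_{\Omega\cap B_{2r_{k}}}\abs{\nabla v}^p\,\dx - \int_{\Omega\cap B_{r_k}}\abs{\nabla v}^p\,\dx$, the tail of a telescoping sum. Thus if we knew $v\in W^{1,p}(\Omega)$ globally (i.e. $\int_\Omega\abs{\nabla v}^p\,\dx<\infty$), then $D(r_k)\to 0$ as $k\to\infty$, and since $\varphi_{r_k}\equiv 1$ on $B_{r_k}$ we would get $\int_{\Omega\cap B_{r_k}}\abs{\nabla v}^p\,\dx \le A(r_k)\le C\,D(r_k)^\delta\to 0$, forcing $\nabla v\equiv 0$ on $\Omega$, hence $v$ constant (using that $\Omega$ is connected and open).

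So the real content is to bootstrap global $W^{1,p}$-integrability of $\nabla v$ from the a priori only local information plus \eqref{eq:8}. Here I would argue as follows: trivially $D(r)\le A(r)$, so \eqref{eq:8} gives $A(r)\le C\,A(r)^\delta$, i.e. $A(r)^{1-\delta}\le C$, which yields the uniform bound $A(r)\le C^{1/(1-\delta)}$ for all $r>r_0$. Since $\varphi_r\equiv 1$ on $B_r$, this gives $\int_{\Omega\cap B_r}\abs{\nabla v}^p\,\dx\le A(r)\le C^{1/(1-\delta)}$ for every $r>r_0$; letting $r\to\infty$ and using monotone convergence shows $\int_\Omega\abs{\nabla v}^p\,\dx\le C^{1/(1-\delta)}<\infty$. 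That is the step I expect to be the main obstacle — or rather, the step that is easy to overlook: one must first extract a uniform bound purely from the self-improving inequality $A\le C A^\delta$ with $\delta<1$, before the telescoping/tail argument can even be run.

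With global integrability in hand, I would finish by running the tail argument above rigorously: for the dyadic sequence $r_k=2^kr_0$, the annuli $(\Omega\cap B_{2r_k})\setminus B_{r_k}$ are pairwise disjoint for, say, even $k$ (or one uses $r_k=4^k r_0$ to get genuine disjointness), so $\sum_k \int_{(\Omega\cap B_{2r_k})\setminus B_{r_k}}\abs{\nabla v}^p\,\dx\le \int_\Omega\abs{\nabla v}^p\,\dx<\infty$, whence the summands tend to $0$; since $D(r_k)\le \int_{(\Omega\cap B_{2r_k})\setminus B_{r_k}}\abs{\nabla v}^p\,\dx$, we get $D(r_k)\to 0$. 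Then \eqref{eq:8} gives $\int_{\Omega\cap B_{r_k}}\abs{\nabla v}^p\,\dx\le A(r_k)\le C\,D(r_k)^\delta\to 0$. Since the left side is nondecreasing in $k$, it must be identically $0$, so $\nabla v=0$ a.e. on $\Omega$; as $\Omega$ is connected, $v$ is constant. The only minor technical points are choosing the dyadic ratio large enough to make the annuli disjoint (ratio $\ge 4$ suffices, since the outer radius $2r_k$ of one annulus must not exceed the inner radius $r_{k+1}$ of the next), and invoking connectedness of $\Omega$ to pass from locally constant to constant.
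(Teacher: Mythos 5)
Your proposal is correct and follows essentially the same two-step route as the paper: first use $D(r)\le A(r)$ in \eqref{eq:8} to get the uniform bound $A(r)\le C^{1/(1-\delta)}$ and hence, by monotone convergence, $\nabla v\in L^{p}(\Omega)^{d}$; then use global integrability to force the annulus term to vanish and conclude $\nabla v=0$, whence $v$ is constant by connectedness. The only (harmless) difference is cosmetic: you send the annulus term to zero along a dyadic subsequence via disjointness of the annuli, whereas the paper simply bounds it by $\int_{\Omega}\abs{\nabla v}^{p}\,\dx-\int_{\Omega\cap B_{r}}\abs{\nabla v}^{p}\varphi_{r}^{p}\,\dx$, which tends to zero as $r\to\infty$.
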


\begin{proof}
  In a first step we  show that $\nabla v\in
  L^{p}(\Omega)^{d}$. In a second step we then prove that $\nabla v\in
  L^{p}(\Omega)^{d}$ and~\eqref{eq:8} imply that $\nabla
  v=0$. As $\Omega$ is assumed to be connected we can apply \cite[Lemma
  7.7]{MR1814364} to conclude that $v$ is constant on $\Omega$.

  (i) We first show that $\nabla v\in L^{p}(\Omega)^{d}$. If $\nabla
  v=0$, then there is nothing to show, so assume that $\nabla v\neq
  0$. By possibly increasing $r_0$ we can assume that
  \begin{displaymath}
    \int_{\Omega\cap B_{2r}}\abs{\nabla v}^{p}\varphi^{p}_{r}\,\dr>0
  \end{displaymath}
  for all $r>r_0$. Rearranging inequality~\eqref{eq:8} and using that
  $\delta<1$ yields
  \begin{displaymath}
    \int_{\Omega}\abs{\nabla v}^{p}\varphi^{p}_{r}\,\dx
    =\int_{\Omega\cap B_{2r}}\abs{\nabla v}^{p}\varphi^{p}_{r}\,\dx
    \leq C^{1/(1-\delta)}
  \end{displaymath}
  for all $r>r_0$. Note that $\varphi_{r}^{p}\rightarrow 1_{\R^{d}}$
  pointwise and monotonically increasing as $r\to\infty$.  Hence,
  the monotone convergence theorem implies that
  \begin{equation}
    \label{eq:5}
    \int_\Omega\abs{\nabla v}^{p}\,\dx
    =\lim_{r\to\infty}\int_{\Omega\cap B_{2r}}\abs{\nabla v}^{p}\varphi^{p}_{r}\,\dx
    \le C^{1/(1-\delta)}<\infty.
  \end{equation}
  In particular $\nabla v\in L^p(\Omega)^d$ as claimed.

  (ii) Assuming that $\nabla v\in L^p(\Omega)^d$ we now show that
  $\nabla v=0$. We can rewrite~\eqref{eq:8} in the form
  \begin{displaymath}
    \int_{\Omega\cap B_{2r}}\abs{\nabla v}^{p}\,\varphi^{p}_{r}\,\dx
    \leq C \left(\int_{\Omega}\abs{\nabla v}^{p}\,\dx
      - \int_{\Omega\cap B_{r}}\abs{\nabla v}^{p}\,\varphi_{r}^{p}\,\dx
    \right)^{\delta}.
  \end{displaymath}
  Letting $r\to\infty$, making use of \eqref{eq:5} and the fact that
  $\delta>0$, we deduce that $\|\nabla v\|_p\leq 0$, that is,
  $\norm{\nabla v}_p=0$.
\end{proof}
\begin{remark}
  \label{rem:prop1}
  Suppose that $v\in W^{1,p}_{\loc}(\Omega)$ satisfies
  inequality~\eqref{ineq:p-integral-Br-Kc}, and that there exists
  $r_{0}>0$ such that $v\in L^{\infty}(\Omega\cap B_{r_{0}}^{c})$.
  Then, for every $b\in\mathbb R$
  \begin{displaymath}
    \frac{1}{r^{p}}\int_{B_{2r}\setminus B_{r}} \abs{v-b}^{p}\,\dx
    \leq \frac{\norm{v-b}_\infty^p}{r^{p}}\,\int_{B_{2r}\setminus B_{r}} 1\,\dx
    \leq \frac{\omega_d}{d}(2^d-1)\norm{v-b}_\infty^pr^{d-p}
  \end{displaymath}
  for all $r\ge r_{0}$, where $\norm{v-b}_\infty
  :=\norm{v-b}_{L^\infty(B_{r_0}^c)}$ and $\omega_d$ is the surface area
  of the unit sphere in $\mathbb R^d$. If $p\ge d$, then
  Proposition~\ref{prop:1} implies that $v$ is constant.
\end{remark}
We next show that weak solutions of \eqref{eq:1} satisfy
\eqref{ineq:p-integral-Br-Kc}. Before we can do that we want to state
our precise assumptions and give a definition of weak solutions of
boundary-value problem~\eqref{eq:1}.

\begin{assumption}
  \label{ass:1}
  By assumption, an exterior domain $\Omega$ as defined in the
  introduction is an open connected set such that $\Omega^c$ is
  compact. In particular $\partial\Omega$ is compact. Thus there exists
  $r_{0}>0$ such that $\partial\Omega\subseteq B_r:=B(0,r)$ for all
  $r\ge r_{0}$. We consider solutions of \eqref{eq:1} that lie in
  \begin{displaymath}
    V^{1,p}(\Omega):=\Bigl\{v\in W_{\loc}^{1,p}(\Omega)\colon
    \text{$v\in W^{1,p}(\Omega\cap B_r)$ for all $r>r_0$}\Bigr\}.
  \end{displaymath}
  For simplicity we now assume that $\partial\Omega$ is
  Lipschitz. We assume that $\Gamma_1,\Gamma_2$ are disjoint subsets of
  $\partial\Omega$ such that $\Gamma_1$ is closed and
  $\Gamma_1\cup\Gamma_2=\partial\Omega$. We let
  $V^{1,p}_{\Gamma_1}(\Omega)$ be the closure of the vector space
  \begin{displaymath}
    \Bigl\{v\in V^{1,p}(\Omega)\colon
    \text{$v=0$ in a neighbourhood of $\Gamma_1$}\Bigr\}
  \end{displaymath}
  in $V^{1,p}(\Omega)$. If $h(x,v)\equiv 0$ no regularity assumption on
  $\partial\Omega$ is needed.
\end{assumption}
We use the space $V^{1,p}(\Omega)$ because we do not want to assume that
the solutions of \eqref{eq:1} are in $L^p(\Omega)$.
\begin{definition}
  \label{def:wsol}
  We say that a function $v$ is a \emph{weak solution} of the boundary
  value problem~\eqref{eq:1} on $\Omega$ if $v\in
  V_{\Gamma_1}^{1,p}(\Omega)$ and
  \begin{equation}\label{ineq:weak-formulation}
    \int_{\Omega}\abs{\nabla v}^{p-2}\nabla v\nabla \varphi\,\dx
    +\int_{\Gamma_2} h(x,v)\,\varphi\,\dH = 0
  \end{equation}
  for every $\varphi\in V^{1,p}_{\Gamma_1}(\Omega)$ with
  $\supp(\varphi)\subseteq B_r$.
\end{definition}

The above definitions have to be modified in an obvious manner for
non-smooth domains. In particular, when using the setting from
\cite{MR1988110,MR1650081,MR2538601} we require that $v$ is in the
Maz'ya space $W_{p,p}^1(\Omega\cap B_r,\partial\Omega)$ for all $r$
large enough.

If $\Omega$ admits the divergence theorem and the solution $v$ is smooth
enough, then an integration by parts shows that $v$ is a weak solution
of~\eqref{eq:1} if and only if $v$ satisfies~\eqref{eq:1} in a classical
sense.  We next show that positive solutions of \eqref{eq:1} satisfy
\eqref{ineq:p-integral-Br-Kc}.

\begin{proposition}
  \label{prop:2}
  Let Assumption~\ref{ass:1} be satisfied and let $\varphi_{r}$ be as in
  Proposition~\ref{prop:1}, and $r_0>0$ such that $\Omega^c\subseteq
  B_{r_0}$. Suppose that \eqref{assum:h} is satisfied and that $v$ is a
  weak solution of \eqref{eq:1}. Then,
  inequality~\eqref{ineq:p-integral-Br-Kc} holds with $b=0$. In the case
  of homogeneous Neumann boundary conditions, that is, if $h(x,v)\equiv
  0$ and $\Gamma_2=\partial\Omega$, then every weak solution of
  \eqref{eq:1} satisfies \eqref{ineq:p-integral-Br-Kc} for every
  $b\in\mathbb R$.
\end{proposition}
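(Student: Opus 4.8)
The plan is to use the weak formulation \eqref{ineq:weak-formulation} with the test function $\varphi = (v-b)\varphi_r^p$ and then estimate the resulting terms. First I would check that this is a legitimate test function: for $b=0$ the function $v\varphi_r^p$ lies in $V^{1,p}_{\Gamma_1}(\Omega)$ with support in $B_{2r}$ because $v\in V^{1,p}(\Omega)$ and $\varphi_r$ is smooth, compactly supported, and $v$ vanishes near $\Gamma_1$ (so $v\varphi_r^p$ does too, being an appropriate limit of functions vanishing near $\Gamma_1$); in the pure Neumann case $\Gamma_1=\emptyset$, so $(v-b)\varphi_r^p\in V^{1,p}(\Omega)$ with compact support for every $b$, which is exactly where the extra freedom in the choice of $b$ comes from. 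Substituting this test function, the gradient term expands via $\nabla\varphi = \varphi_r^p\,\nabla v + p(v-b)\varphi_r^{p-1}\nabla\varphi_r$, giving
\begin{displaymath}
  \int_\Omega |\nabla v|^p\varphi_r^p\,\dx
  = -p\int_\Omega |\nabla v|^{p-2}\nabla v\cdot\nabla\varphi_r\,(v-b)\varphi_r^{p-1}\,\dx
  -\int_{\Gamma_2} h(x,v)\,(v-b)\varphi_r^p\,\dH.
\end{displaymath}

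The boundary term is where \eqref{assum:h} enters. For $b=0$ the sign condition $h(x,v)v\ge 0$ forces $\int_{\Gamma_2} h(x,v)\,v\,\varphi_r^p\,\dH\ge 0$, so dropping it only helps the inequality; in the pure Neumann case $h\equiv 0$ so the boundary term is simply absent. Hence in both situations
\begin{displaymath}
  \int_\Omega |\nabla v|^p\varphi_r^p\,\dx
  \le p\int_\Omega |\nabla v|^{p-1}\,|\nabla\varphi_r|\,|v-b|\,\varphi_r^{p-1}\,\dx.
\end{displaymath}
Now I would use the key geometric facts about $\varphi_r$: $\nabla\varphi_r$ is supported in $B_{2r}\setminus B_r$ (since $\varphi\equiv 1$ on $\overline{B}_1$ and $\supp\varphi\subseteq\overline{B}_2$), and $|\nabla\varphi_r(x)| = r^{-1}|\nabla\varphi(x/r)|\le \|\nabla\varphi\|_\infty/r$. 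Applying Hölder's inequality with exponents $p/(p-1)$ and $p$ to the right-hand side, and noting that $|\nabla v|^{p-1}\varphi_r^{p-1} = (|\nabla v|^p\varphi_r^p)^{(p-1)/p}$, yields exactly
\begin{displaymath}
  \int_\Omega |\nabla v|^p\varphi_r^p\,\dx
  \le \frac{p\|\nabla\varphi\|_\infty}{r}
  \left(\int_{(\Omega\cap B_{2r})\setminus B_r}|\nabla v|^p\varphi_r^p\,\dx\right)^{(p-1)/p}
  \left(\int_{(\Omega\cap B_{2r})\setminus B_r}|v-b|^p\,\dx\right)^{1/p},
\end{displaymath}
which is \eqref{ineq:p-integral-Br-Kc} with $C_0 = p\|\nabla\varphi\|_\infty$.

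The main obstacle I anticipate is the justification that $(v-b)\varphi_r^p$ is an admissible test function in Definition~\ref{def:wsol}, since the definition only allows test functions in $V^{1,p}_{\Gamma_1}(\Omega)$ with support in some $B_r$, and one must argue that the product of $v\in V^{1,p}(\Omega)$ with the cutoff stays in that space (an approximation argument on $\Omega\cap B_{2r}$, where $v$ is genuinely $W^{1,p}$, together with the definition of $V^{1,p}_{\Gamma_1}$ as a closure). A secondary technical point is handling the case $p<2$, where $|\nabla v|^{p-2}\nabla v$ is not obviously integrated against $\nabla v$ without care near $\{\nabla v = 0\}$; but since $|\nabla v|^{p-2}\nabla v\cdot\nabla v = |\nabla v|^p$ pointwise and everything is nonnegative, this causes no real difficulty once the test function is admitted. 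The estimate itself is then just product rule, the sign condition, and Hölder.
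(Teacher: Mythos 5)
Your proposal is correct and follows essentially the same route as the paper: test with $v\varphi_r^p$ (or $(v-b)\varphi_r^p$ in the Neumann case), use the sign condition \eqref{assum:h} to drop the boundary term, and apply H\"older's inequality with the support and decay properties of $\nabla\varphi_r$, obtaining the same constant $C_0=p\,\norm{\nabla\varphi}_{L^\infty(B_2)}$. The only cosmetic difference is that the paper handles general $b$ in the Neumann case by observing that $v-b$ is again a weak solution and invoking the $b=0$ case, rather than inserting $(v-b)\varphi_r^p$ directly, which amounts to the same computation.
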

\begin{proof}
  Let $r\ge r_{0}$ and let $\varphi_{r}$ be the same test-function as in
  Proposition~\ref{prop:1}. Then $v\varphi^{p}_{r}\in
  W^{1,p}_{\loc}(\Omega)$ with support in $B_{2r}$. Moreover, by
  definition of $\varphi_r$ we have $v\varphi^{p}=v$ on $\Omega\cap
  B_{r}$. Hence $v\varphi^p$ is a suitable test function to be used in
  \eqref{ineq:weak-formulation}. Using that $v$ is a weak solution of
  \eqref{eq:1} gives
  \begin{displaymath}
    \begin{split}
      0&=\int_{\Omega\cap B_{2r}}\abs{\nabla v}^{p-2}\nabla
      v\nabla(v\varphi^{p}_{r})\,\dx
      +\int_{\Gamma_2} h(x,v)\,v\,\varphi^{p}_{r}\,\dH\\
      &=\int_{\Omega\cap B_{2r}}\abs{\nabla v}^{p}\,\varphi^{p}_{r}\,\dx
      +\frac{p}{r} \int_{(\Omega\cap B_{2r})\setminus B_{r}}
      v\varphi_{r}^{p-1}\abs{\nabla
        v}^{p-2}\nabla v\nabla\varphi(\cdot/r)\,\dx\\
      &\qquad+ \int_{\Gamma_2} h(x,v)\,v\,\varphi^{p}_{r}\,\dH.
    \end{split}
  \end{displaymath}
  Rearranging this equation we arrive at
  \begin{multline*}
    \int_{\Omega\cap B_{2r}}\abs{\nabla v}^{p}\,\varphi^{p}_{r}\,\dx
    +\int_{\Gamma_2} h(x,v)\,v\,\varphi^{p}_{r}\,\dH\\
    =-\frac{p}{r} \int_{(\Omega\cap B_{2r})\setminus B_{r}}v\varphi_{r}^{p-1}\abs{\nabla
      v}^{p-2}\nabla v\nabla\varphi(\cdot/r)\,\dx.
  \end{multline*}
  By assumption \eqref{assum:h} we have $h(x,v)v\geq 0$. Setting
  $C_{0}:=p\,\norm{\nabla\varphi}_{L^{\infty}(B_{2})}$ and applying
  H\"older's inequality we obtain
  \begin{multline*}
    \int_{\Omega\cap B_{2r}}\abs{\nabla v}^{p}\,\varphi^{p}_{r}\,\dx\\
    \leq \frac{C_{0}}{r} \left(\int_{(\Omega\cap B_{2r})\setminus
        B_{r}}\abs{\nabla v}^{p}\,\varphi_{r}^{p}\,\dx
    \right)^{(p-1)/p}\, \left(\int_{(\Omega \cap B_{2r})\setminus
        B_{r}}\, \abs{v}^{p}\,\dx\right)^{1/p},
  \end{multline*}
  which is \eqref{ineq:p-integral-Br-Kc} with $b=0$. In the case of
  homogeneous Neumann boundary conditions, for every $b\in\mathbb R$,
  the function $v-b$ is another weak solution of \eqref{eq:1}. Hence we
  can replace $v$ by $v-b$ in the above calculations to obtain
  \eqref{ineq:p-integral-Br-Kc}.
\end{proof}

\begin{remark}
  Note that the above proof only uses that
  \begin{displaymath}
    0\leq\int_{\Gamma_2} h(x,v)v\varphi^{p}_{r}\,\dH
    =\int_{\Gamma_2} h(x,v)v\,\dH<\infty.
  \end{displaymath}
\end{remark}


\section{Proofs of the main theorems}
\label{section:proofs}
This section is dedicated to the proofs of Theorems~\ref{thm:1}
and~\ref{thm:2}. By rescaling, we can assume without loss of generality
that $\Omega^{c}\subseteq B_{1}$ and that $v$ is $p$-harmonic on
$\overline{B}_{1}^{c}$.

\subsection{Proof of Theorem~\ref{thm:1}.}
Assume that $1<p<d$, and that $v$ is a positive weak solution of
\eqref{eq:1} satisfying $\lim_{\abs{x}\to\infty}v(x)=0$. We need to show
that $v\equiv 0$. Due to Propositions~\ref{prop:1} and~\ref{prop:2} we
only need to show that there exists $r_{0}>0$ such that $v$ satisfies
\eqref{p-integral:bound} with $b=0$ for all $r\ge r_{0}$.  By
\eqref{eq:6} or Proposition~\ref{prop:decay} if $p=2$, there are
constants $c_1$, $c_{2}>0$ such that
\begin{displaymath}
  0\le v(x)\le c_1\,\abs{x}^{(p-d)/(p-1)}
\end{displaymath}
for every $x\in \overline{B_{2}^{c}}$. Hence,
\begin{multline}
  \label{eq:7}
  \frac{1}{r^{p}}\int_{B_{2r}\setminus B_{r}} \abs{v}^{p}\,\dx
  \le \frac{c_{1}^{p}}{r^{p}}\int_{B_{2r}\setminus B_{r}}
  \abs{x}^{p\,(p-d)/(p-1)}\,\dx\\
  = c_1^p\frac{\omega_d}{r^p}\int_{r}^{2r}s^{p\,(p-d)/(p-1)}s^{d-1}\,\ds
  = c_1^p\omega_d\, c_{2}\,r^{(p-d)/(p-1)}
\end{multline}
for all $r\geq r_0:=2$, where $\omega_d$ is the surface area of the unit
sphere in $\mathbb R^d$ and $c_{2}=\ln 2$ if $d=p^{2}$ and
$c_{2}=\frac{p-1}{d-p^2}(2^{(p^{2}-d)/(p-1)}-1)$ if $d\neq p^{2}$. As
$p<d$ we conclude that $v$ satisfies \eqref{p-integral:bound} with $b=0$
for every $r\ge 2$. As $\lim_{\abs{x}\to\infty}v(x)=0$ we conclude that
$v\equiv 0$.

If $p\geq d$, then every non-trivial positive bounded solution of
\eqref{eq:1} has a strictly positive limit as $|x|\to\infty$; see
\cite[Lemma~A.2]{MR2807111}. Because we assume that the limit is zero,
we must have $v\equiv 0$. Observe that these arguments do not make use of
the boundary conditions. This completes the proof of
Theorem~\ref{thm:1}.

\subsection{Proof of Theorem~\ref{thm:2}}
Let $v$ be a semi-bounded weak solution of problem~\eqref{eq:1} with
homogeneous Neumann boundary conditions, that is,
$\Gamma_2=\partial\Omega$ and $h(x,v)\equiv 0$. Recall also that no
regularity assumptions on $\partial\Omega$ are needed.

Note that the $p$-Laplace operator $\Delta_{p}$ is an \emph{odd}
operator, that is, $\Delta_{p}(-v)=-\Delta_{p}v$. Hence, for every $c\in
\R$, the function $c\pm v$ is another solution of
problem~\eqref{eq:1}. If $v$ is bounded from below we can therefore
replace $v$ by $v-\inf_{x\in\Omega}v(x)$, and if $v$ is bounded from
above we can replace $v$ by $\sup_{x\in\Omega}v(x)-v$. In either case we
get a new solution $v\geq 0$ with $\inf_{x\in\Omega}v(x)=0$. As before,
we also assume that $\Omega^{c}\subseteq B_1$.

If $1<p<d$, then by~\eqref{eq:6} the finite limit
$b:=\lim_{|x|\to\infty}v(x)$ exists.  By Proposition~\ref{prop:2} inequality
\eqref{ineq:p-integral-Br-Kc} is satisfied. To show that $v$ satisfies
\eqref{p-integral:bound} with $b$ just defined we repeat the calculation
\eqref{eq:7} with $v$ replaced by $v-b$, using the decay estimate from
\eqref{eq:6}. We can now apply Proposition~\ref{prop:1} to conclude that
$v$ is constant.

It remains to deal with the case $p\geq d$. Recall that
by~\cite[Theorem~2.3]{MR2807111}, every positive $p$-harmonic function
$v$ on $\Omega$ is either bounded in a neighbourhood of infinity and has
a limit $b:=\lim_{\abs{x}\to\infty}v(x)$ or $v\sim\mu_{p}$ near
infinity. In the second case the original solution considered is
asymptotically equivalent to $\pm\mu_p$ near infinity.  Assume now that
$v$ has a limit as $\abs{x}\to\infty$. To show that $v$ is constant we
first note that by Proposition~\ref{prop:2}, $v$ satisfies
\eqref{ineq:p-integral-Br-Kc} with $b=0$. As $v$ is bounded in a
neighbourhood of infinity and since $p\geq d$, Remark~\ref{rem:prop1}
implies that $v$ satisfies \eqref{p-integral:bound}. Hence by
Proposition~\ref{prop:1}, $v$ is constant. This completes the proof of
Theorem~\ref{thm:2}.



\providecommand{\bysame}{\leavevmode\hbox to3em{\hrulefill}\thinspace}

\end{document}